
\newif\ifELS
\ELStrue
\ifELS
\documentclass[3p,times]{elsarticle}
\usepackage{amsthm} %
\usepackage{amsmath}
\usepackage{amsfonts}
\else
\documentclass{amsart}
\fi

\ifELS
\else
\fi

\makeatletter
\newcommand\mynobreakpar{\par\nobreak\@afterheading}

\makeatother

\usepackage{url}
\usepackage{hyperref}   %

\providecommand\bledenredcontinuous{}
\providecommand\eledenredcontinuous{}
\providecommand\bledenredbounded{}
\providecommand\eledenredbounded{}
\providecommand\bcervenec{}
\providecommand\ecervenec{}
\providecommand\bsrpen{}
\providecommand\esrpen{}

\providecommand\bsrpenxv{}
\providecommand\esrpenxv{}
\providecommand\esrpenxvunskip{}
\providecommand\brijenxv{}
\providecommand\erijenxv{}
\providecommand\brijenxvbounded{}
\providecommand\erijenxvbounded{}
\providecommand\bkoneclistopaduxvH{}
\providecommand\ekoneclistopaduxvH{}

\message{
      ^^J
      ^^J
      ^^J
      ^^J The authors use package 'cases' downloaded from
      ^^J ftp://ftp.dante.de/tex-archive/macros/latex/contrib/cases/cases.sty
      ^^J ftp://ftp.dante.de/tex-archive/macros/latex/contrib/cases.zip
      ^^J
      ^^J
	}
\usepackage{cases}

\newcommand\itemref[1]{(\ref{#1})}
\newtheorem{theorem}{Theorem}[section]

\newtheorem{lemma}[theorem]{Lemma}
\newtheorem{proposition}[theorem]{Proposition}
\theoremstyle{definition}
\newtheorem{definition}[theorem]{Definition}
\newtheorem*{ack*}{Acknowledgement}
\theoremstyle{remark}
\newtheorem{remark}[theorem]{Remark}
\newtheorem*{remark*}{Remark}

\newcommand\R{\mathbb{R}}

\newcommand\N{\mathbb N}

\newcommand\C{\mathcal{C}}

\newcommand\ddd{\varrho}
\DeclareMathOperator{\interior}{int}
\DeclareMathOperator{\cvx}{conv}

\DeclareMathOperator{\dist}{dist}

\DeclareMathOperator{\spt}{supp}

\DeclareMathOperator{\Lip}{Lip}

\newcommand\fcolon{\colon}
\newcommand\setcolon{:}
\providecommand\boundary{\partial}
\newcommand{\closure}{\overline}

\newcommand{\normZ}[1]{\left\|#1\right\|_{Z}}

\newcommand \GG{\mathcal G}
\newcommand \GGGG{\widetilde {\mathcal G}}
\newcommand \HH{\mathcal H}
\newcommand \AAA{{\mathcal A}}

\def\xxxa{November 29, 2015}\ifELS\else\edef\xxxa{\noexpand\relax \xxxa}\fi
\edef\xxxb{\today}\ifx\xxxa\xxxb \AtEndDocument{\label{l:XbezF-in-other-paper}}\fi

\def\JKblue{}    %
\def\eJKblue{}

\begin{document}

\title{Extensions of vector-valued Baire one functions with preservation of~points of~continuity\tnoteref{t1}\tnoteref{tack}}

\tnotetext[t1]{\relax
\ %
\\
{\bf
\copyright 2015. This manuscript version is made available under the CC-BY-NC-ND 4.0 license http://creativecommons.org/licenses/by-nc-nd/4.0/
}
}

\def\AckBody{
 The research leading to these results has received funding
 from the European Research Council under the European Union's Seventh
 Framework Programme (FP7/2007-2013) / ERC Grant Agreement n. 291497.
 \\
 The first author would like to thank the private company RSJ a.s.\ for the support of his research activities.
 The second author was also supported by grants No. 14-07880S of GA\,\v{C}R and RVO: 67985840.
}

\tnotetext[tack]{\AckBody}

\ifELS

\author[am]{M.~Koc\corref{cor1}}
\ead{martin.koc@rsj.com}
\author[a1,a2]{J.~Kol\'a\v{r}}
\ead{kolar@math.cas.cz}

\address[am]{RSJ a.s., Na Florenci 2116/15, 110 00 Praha 1, Czech Republic}
\address[a1]{Institute of Mathematics, Czech Academy of Sciences, \v Zitn\'a 25, 115 67 Praha 1, Czech Republic}
\address[a2]{Mathematics Institute, University of Warwick, Coventry, UK \ (September 2014 -- October 2015)}
\cortext[cor1]{Corresponding author}

\else
\author{M. Koc, J. Kol\'a\v{r}}
\fi

\ifELS\else  \begin{document}  \fi

\begin{abstract}
  We prove an extension theorem
  (with non-tangential limits)
  for vector-valued Baire one functions.
  Moreover, at every point where the function is continuous (or bounded),
  the continuity (or boundedness) is preserved.
  More precisely:
  Let $H$ be a~closed subset of a~metric space $X$ and let $Z$ be a~normed vector space.
  Let $f\colon H\to Z$ be a~Baire one function.
  We show
  that
  there is a~continuous function $g\colon (X\setminus H) \to Z$
  such that,
  for every $a\in \partial H$,
  the non-tangential limit of $g$ at
  $a$
  equals $f(a)$
  and, moreover, if $f$
  is continuous at
  $a\in H$
  (respectively
  bounded in a~neighborhood of
  $a\in H$)
  then the extension $F=f\cup g$
  is continuous at
  $a$
  (respectively
  bounded in a~neighborhood of~$a$).

  We also prove a~result on pointwise approximation of vector-valued Baire one functions
  by a~sequence of locally Lipschitz functions that converges ``uniformly''
  (or, ``continuously'')
  at points where the approximated function is continuous.

  In an accompanying paper (Extensions of vector-valued functions with preservation of derivatives),
  the main result is applied to
  extensions of vector-valued functions
  defined on a~closed subset of Euclidean or Banach space
  with preservation of differentiability,
  continuity and (pointwise) Lipschitz property.
\end{abstract}
\begin{keyword}
 vector-valued Baire one functions\sep extensions\sep non-tangential limit\sep continuity points\sep pointwise approximation\sep
 uniform convergence\sep
 continuous convergence

 \MSC[2010]
 54C20\sep 54C05\sep 26A21\sep 46E40
\end{keyword}

\maketitle

\section{Introduction}

 The purpose of this paper is
 to prove an extension theorem for vector-valued
 Baire one functions.
 The result is directly used in the accompanying paper \cite{KocKolarDer}
 where we
 obtain
 new results on extending vector-valued functions that
 are differentiable (or continuous, Lipschitz, \dots) at some points,
 in a~way that preserves the differentiability (continuity, Lipschitz property, \dots).

 Recall that a~function $f$ is {\em Baire one}
 if it is the pointwise limit of a~sequence of continuous functions.

 \brijenxv
 If $(X,\ddd)$ is a metric space, $a\in X$ and $r>0$, $B(a,r)=B_X(a,r)$
 denotes the open ball $\{x\in X\setcolon \ddd(a,x) < r\}$.
 If $X=Z$ is a normed linear space, we sometimes use also the closed ball
 denoted by
 $\closure B_Z(a, r)$.
 \erijenxv

 Our main result is the following theorem:

\begin{theorem}\label{thm:B1ext}
  Let $(X,\ddd)$ be a~metric space, $H\subset X$ a~closed set, $Z$ a~normed linear space
  and $f\fcolon H\to Z$ a~Baire one function.
  Then there
  exists a~continuous function $g\fcolon (X\setminus H) \to Z$ such that
  \begin{equation}\label{eq:ALP3}
  \lim_ {\substack{x\to a\\ x\in X\setminus H}} \normZ{g(x) - f(a)} \frac{\dist (x,H)}{\ddd(x,a)} = 0
  \tag{NT}
  \end{equation}
  for every $a\in \boundary H$,
  \begin{equation}\label{eq:continuity}
  \lim_ {\substack{x\to a\\ x\in X\setminus H}} g(x) = f(a)
  \tag{C}
  \end{equation}
  whenever $a\in\boundary H$ and $f$ is continuous at $a$,
  and
  \brijenxvbounded
  \begin{equation}\label{eq:boundedness}
      \text{$g$ is bounded on $B(a,r) \setminus H$}
  \tag{B}
  \end{equation}
  whenever
  \bkoneclistopaduxvH
  $a\in \boundary H$ (or even $a\in H$),
  \ekoneclistopaduxvH
  $r\in (0,\infty)\cup\{\infty\}$
  and $f$ is bounded on $B(a, 12 r) \cap H$.
  \erijenxvbounded
\end{theorem}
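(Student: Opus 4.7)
The strategy is to glue together a pointwise approximation of $f$ by well-behaved functions on $H$ with a Whitney-type partition of unity on $X\setminus H$. First I would invoke (or separately prove, as announced in the abstract) an approximation lemma producing a sequence of locally Lipschitz maps $f_n\fcolon H\to Z$ with $f_n\to f$ pointwise on $H$, together with two enhancements: (i) at every $a\in H$ where $f$ is continuous the convergence is \emph{continuous}, i.e.\ $f_n(x_n)\to f(a)$ for any $x_n\to a$ in $H$; and (ii) local boundedness of $f$ near $a$ yields uniform boundedness of $(f_n)$ on a somewhat smaller neighborhood.

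Next I would take a locally finite (in $X\setminus H$) cover $\{B_j=B(x_j,r_j)\}$ of $X\setminus H$ with $r_j=c\,\dist(x_j,H)$ for some small absolute constant $c\in(0,1)$, together with a subordinate Lipschitz partition of unity $\{\phi_j\}$. For each~$j$ I would pick an anchor $a_j\in H$ with $\ddd(x_j,a_j)\le(1+\varepsilon_j)\dist(x_j,H)$ for some $\varepsilon_j\downto 0$ as $r_j\downto 0$, together with an index $n_j$ with $n_j\to\infty$ as $r_j\to 0$. Then I would set
\[
 g(x)\;:=\;\sum_j \phi_j(x)\, f_{n_j}(a_j), \qquad x\in X\setminus H.
\]
Continuity of $g$ on $X\setminus H$ is immediate from local finiteness of the cover and continuity of each $\phi_j$.

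The three conclusions then follow from the basic estimate $\normZ{g(x)-f(a)}\le\sum_j \phi_j(x)\,\normZ{f_{n_j}(a_j)-f(a)}$. Property~\eqref{eq:boundedness} holds once $c$ is chosen small enough that $\phi_j(x)>0$ with $x\in B(a,r)$ forces $a_j\in B(a,12r)\cap H$, so that~(ii) yields a uniform bound on $f_{n_j}(a_j)$. Property~\eqref{eq:continuity} uses~(i): at a continuity point $a$ of $f$, anchors $a_j$ with $\phi_j(x)>0$ for $x$ near $a$ satisfy $\ddd(a_j,a)\le C\,\ddd(x,a)\to 0$ while $n_j\to\infty$, so $f_{n_j}(a_j)\to f(a)$ and hence $g(x)\to f(a)$.

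The main obstacle I anticipate is~\eqref{eq:ALP3} at discontinuity points $a\in\partial H$, where neither~(i) nor the mere pointwise convergence of $f_n$ guarantees $f_{n_j}(a_j)\to f(a)$ along the diagonal $a_j\to a$, $n_j\to\infty$. The redeeming feature is the tangential factor $\dist(x,H)/\ddd(x,a)$, which tolerates arbitrarily large errors along tangential approaches ($\dist(x,H)\ll\ddd(x,a)$), while non-tangential approaches force $r_j$ and $\ddd(a_j,a)$ to be comparable to $\ddd(x,a)$. The crux of the argument is therefore to calibrate the approximation lemma together with the index assignment $j\mapsto n_j$ so that the error at Whitney scale $r_j$ decays faster than the permitted inflation $\ddd(x,a)/\dist(x,H)$; this presumably requires strengthening the approximation lemma to deliver, beyond pointwise convergence, a controlled diagonal convergence $f_{n(y)}(y)\to f(a)$ for suitable functions $n(\cdot)$ coupled to the geometry of $H$ near~$a$. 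This calibration, and not the Whitney-gluing machinery itself, is where the real work lies.
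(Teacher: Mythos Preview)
Your overall architecture matches the paper's: an approximation lemma on $H$ yielding bounded locally Lipschitz $f_n\to f$ with UCPC and the local uniform boundedness property, followed by a partition-of-unity gluing on $X\setminus H$. Your treatments of \eqref{eq:continuity} and \eqref{eq:boundedness} are essentially correct and parallel the paper's. The paper in fact first defines a discontinuous $g(x)=f_{n(x)}(u(x))$ and only afterwards smooths it with a partition of unity, but that is a cosmetic difference from your direct $g(x)=\sum_j\phi_j(x)f_{n_j}(a_j)$.

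The genuine gap is exactly where you flag it, at \eqref{eq:ALP3}, and your proposed remedy points in the wrong direction. You suggest strengthening the approximation lemma to produce some diagonal convergence $f_{n(y)}(y)\to f(a)$. No such strengthening is available (or needed): at a discontinuity point $a$, one cannot hope that $f_n(y)\to f(a)$ along any diagonal with $y\to a$, since already $f(y)\not\to f(a)$. The paper instead exploits the \emph{local Lipschitz} property of the $f_n$ that you have but never use. With $M_n=\sup_H\|f_n\|$ and $K_{x,n}$ the Lipschitz constant of $f_n$ on a ball around $u(x)$ of radius roughly $nM_n\dist(x,H)$, one sets $n(x)$ to be the largest $n$ with $\dist(x,H)<\bigl(nK_{x,n}(nM_n+2)\bigr)^{-1}$, and then argues in two cases. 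If $\dist(x,H)/\ddd(x,a)\le 1/(n_xM_{n_x})$ (tangential approach), the factor kills the merely bounded error $\|f_{n_x}(u(x))-f(a)\|\le M_{n_x}+\|f(a)\|$. If $\dist(x,H)/\ddd(x,a)>1/(n_xM_{n_x})$ (non-tangential), then $\ddd(u(x),a)\le(n_xM_{n_x}+2)\dist(x,H)<1/(n_xK_{x,n_x})$, so the Lipschitz bound gives $\|f_{n_x}(u(x))-f_{n_x}(a)\|\le K_{x,n_x}\ddd(u(x),a)<1/n_x$, and one finishes with the \emph{pointwise} convergence $f_{n_x}(a)\to f(a)$ at the fixed point $a$. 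Thus the calibration you seek couples $n(x)$ to both the global bounds $M_n$ and the local Lipschitz constants $K_{x,n}$, not to any enhanced convergence of the approximating sequence.
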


 In \cite[Theorem~6]{ALP}, the first part of the previous theorem
 (i.e., property~\eqref{eq:ALP3} without properties \eqref{eq:continuity} and \eqref{eq:boundedness})
 was proved in the special cases $Z=\R$
 and $\dim Z < \infty$ (coordinate-wise; see \cite[p.~607]{ALP}).
 In \cite{KZ}, this was extended to include property~\eqref{eq:continuity}.
 Our main contribution is that $Z$ can be an arbitrary normed linear space.

 Properties \eqref{eq:ALP3} and \eqref{eq:continuity} constitute the most important part of Theorem~\ref{thm:B1ext}.
 Other statements (continuity of $g$ and property \eqref{eq:boundedness}) are added since they might
 be useful and do not require much labour.
 The continuity of $g$ is achieved just in the last paragraph of the proof, and from there it is
 obvious how a~higher degree of smoothness can also be achieved when $X$ admits a~linear structure
 and a~smooth partition of unity (cf.\ also
 \cite[Lemma~2.5 (PROVISIONAL REFERENCE)]{KocKolarDer}).
  Property~\eqref{eq:boundedness}
  requires only a~bit more of attention during the proof
  and it is
  actually
  used (together with properties \eqref{eq:ALP3} and \eqref{eq:continuity})
  in the accompanying paper \cite{KocKolarDer}.

 Theorem~\ref{thm:B1ext} is proved in Section~\ref{sec:B1ext-}.
 Its proof depends on Proposition~\ref{prop:Lip} which provides
 an approximation of a~given Baire one
 function $f$ by a~sequence of locally Lipschitz functions that
 converges ``uniformly'' at points of continuity of $f$
 (see property UCPC in Definition~\ref{def:UCPC}).

\smallbreak

\section{Approximation of a~Baire one function by a~sequence of continuous functions}\label{sec:approximation}
  In order to prove property~\eqref{eq:continuity}
  from Theorem~\ref{thm:B1ext}
  we need the following auxiliary notion.
\begin{definition}
    \label{def:UCPC}
   Let $Y$, $Z$ be metric spaces and
   let $h_ n\fcolon Y \to Z$ ($n\in \N$) and $f\fcolon Y \to Z$ be arbitrary functions.
   We say that the pair $ ( \{ h_ n \} , f ) $ has 
   the property of
   {\em uniform convergence at points of continuity} (shortly UCPC)
   if the following holds:
   For every $y_ 0\in Y$ such that $f$ is continuous at $y_ 0$, 
   and for every $\varepsilon >0$,
   there is $k_ 0 = k_ 0 (\varepsilon) \in \N$ and a~neighborhood $U = U (\varepsilon)$ of $y_ 0$
   such that
\begin{equation}\label{eq:hkyfy0}
    \normZ{ h_ k(y) - f(y_ 0) } < \varepsilon
\end{equation}
   for every $k\ge k_ 0$ and $y \in U$.

   If $h_ n\fcolon Y \to Z$ ($n\in \N$) are functions with pointwise limit $f\fcolon Y \to Z$,
   we say that the sequence $\{ h_ n \}$ has 
   the property of
   {\em uniform convergence at points of continuity} (shortly UCPC)
   if the pair $ ( \{ h_ n \} , f ) $ has UCPC.
\end{definition}

\begin{remark}\label{rem:UCPC}
    Property UCPC is probably known and studied in the literature.
  In the terminology of
  \cite[\S~15]{Frink},
  $(\{h_n\}, f)$ has UCPC
  if and only if
  $h_n$ {\em converges continuously} to $f$
  at every $y _ 0 \in Y$ where $f$ is continuous.
 
    Directly related and more general notions are studied by
    Kechris and Louveau \cite{KL}.
    It is easy to show that
    a~sequence $\{h_ k\}$ with pointwise
    limit $f$
    has UCPC if and only if,
    expressed in their notation
    (\cite[p.~211, 212]{KL}),
  $\bigcup_ {\varepsilon >0} P_ {\varepsilon, f}^*
   \supset 
   \bigcup_ {\varepsilon >0} P_ {\varepsilon,     %
				   \{ h_ k\} } '
   ,
  $
  where $P=Y$.
\end{remark}

\begin{remark}
  The Hausdorff's definition of uniform convergence at a~point (\cite[p.~285]{Hausdorff})
  and an easy classical result
  \cite[Theorem IV, p.~285]{Hausdorff}
  are not directly related to the property UCPC.
  The difference is that Hausdorff's definition requires an inequality
  similar
  to \eqref{eq:hkyfy0}
  only for $k=k_0$ instead of $k\ge k_0$.
  There is a~loose connection as
  Proposition~\ref{prop:Lip} below is a~generalization of
  the following corollary of \cite[Theorem IV, p.~285]{Hausdorff}:
  If $f$ is a~Baire one function and $A$ is the set of points of continuity of $f$,
  then there is a~sequence $\{f_n\}$
  \brijenxv
  of continuous functions
  \erijenxv
  pointwise converging to $f$ such that
  $\{f_n\}$ converges (Hausdorff) uniformly
  exactly
  at every point of $A$.
  Proposition~\ref{prop:Lip} includes our version of ``uniform convergence at point'', stronger than the Hausdorff's.
  Furthermore, it is generalized to vector-valued functions
  and strengthened to sequences of locally Lipschitz functions.
\end{remark}

   The following proposition constitutes the core part of the proof of our main result.
   It might be of independent interest.

  \begin{proposition}\label{prop:Lip}
   Let $Y$ be a~metric space and $Z$ a~normed linear space.
   If $f\fcolon Y\to Z$ is a~Baire one function
   then $f$ is the pointwise limit
   of a~sequence
\brijenxvbounded
   $\{f_n\}$
\erijenxvbounded
   of bounded locally Lipschitz functions
   with UCPC
\brijenxvbounded
   such that $\{ f_n \}$
   is uniformly bounded
   on $B_Y(a, r)$
   whenever
   $a\in Y$,
   $r\in (0,\infty)\cup\{\infty\}$
   and $f$
   is bounded on $B_Y(a, 2 r)$.
\erijenxvbounded
  \end{proposition}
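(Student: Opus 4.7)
The plan is a two-stage construction. In the first stage I build continuous approximants $g_n\colon Y\to Z$ with $g_n\to f$ pointwise, the pair $(\{g_n\},f)$ having UCPC, each $g_n$ bounded, and an appropriate local uniform bound on $\{g_n\}$. In the second stage I upgrade each $g_n$ to a bounded locally Lipschitz $f_n$ by a partition-of-unity averaging at a sufficiently fine scale, which preserves pointwise convergence, UCPC, and the boundedness properties.

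For the first stage, the bare Baire one definition gives a continuous sequence converging pointwise to $f$ but this alone need not satisfy UCPC. The key tool is the oscillation function
\[
  \omega_f(y):=\inf_{r>0}\diam f(B_Y(y,r)),
\]
which is upper semi-continuous on $Y$ and vanishes precisely on the continuity set of $f$; in particular each level set $V_m:=\{\omega_f<1/m\}$ is open and contains every continuity point of $f$. At each $y\in V_n$ there exists a ball $B_Y(y,r)$ with $\diam f(B_Y(y,r))<1/n$. Using a locally Lipschitz partition of unity on $V_n$ subordinate to these small-oscillation cells, one forms a local averaging of $f$ on $V_n$ and patches it, via a continuous cut-off, to a base continuous approximant of $f$ (existing by the Baire one property) on $Y\setminus V_n$. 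At a continuity point $y_0\in\boundary$-free continuity set of $f$ and $\varepsilon>0$, the upper semi-continuity of $\omega_f$ together with the continuity of $f$ at $y_0$ yields a neighborhood $U$ of $y_0$ and $n_0\in\N$ such that for $n\ge n_0$ and $y\in U$ the value $g_n(y)$---whether it comes from the local-average part (for $y\in V_n$) or from the base approximant part (for $y$ near but outside $V_n$)---is forced to be $\varepsilon$-close to $f(y_0)$, yielding UCPC.

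For the second stage, for each $n$ choose at each $y\in Y$ a radius $\rho_n(y)>0$ with $\diam g_n(B_Y(y,\rho_n(y)))<1/n$; by paracompactness of $Y$, extract a locally finite refinement $\{B_Y(y_{n,\alpha},\rho_{n,\alpha})\}_\alpha$ of the cover $\{B_Y(y,\rho_n(y))\}_{y\in Y}$ and a subordinate locally Lipschitz partition of unity $\{\phi_{n,\alpha}\}_\alpha$. Set
\[
  f_n(y):=\sum_{\alpha}\phi_{n,\alpha}(y)\,g_n(y_{n,\alpha}).
\]
Each $f_n$ is locally Lipschitz with $\normZ{f_n(y)-g_n(y)}<1/n$ pointwise; hence $f_n\to f$ pointwise and UCPC for $\{f_n\}$ is inherited from $\{g_n\}$. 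Boundedness of each $f_n$ is secured by pre-truncating $g_n$ with a continuous retraction of $Z$ onto $\closure B_Z(0,n)$. For the local uniform boundedness of $\{f_n\}$ on $B_Y(a,r)$ when $f$ is bounded on $B_Y(a,2r)$, the sample points $y_{n,\alpha}$ relevant to $y\in B_Y(a,r)$ lie in $B_Y(a,r+\rho_{n,\alpha})\subseteq B_Y(a,2r)$ once $\rho_{n,\alpha}\le r$ (arrange $\rho_n(y)\le 1/n$), and the construction of $g_n$ can be made so that $\normZ{g_n(y')}$ is controlled locally by the local supremum of $\normZ{f}$; the factor $2$ in the hypothesis is exactly what makes this argument close up.

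The main obstacle is the first stage---producing continuous approximants that already satisfy UCPC rather than just pointwise convergence---because it requires the construction to reflect the local oscillation structure of $f$ via $\omega_f$, while remaining continuous despite $\omega_f$ being only semi-continuous. The generality of the codomain $Z$ (an arbitrary normed linear space, not assumed separable) rules out countable-selection arguments on $Z$, so the construction must be driven by the source-space structure.
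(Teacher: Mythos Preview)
Your two-stage plan mirrors the paper's: first manufacture continuous approximants with UCPC (the paper's Lemma~2.5), then upgrade to bounded locally Lipschitz functions by partition-of-unity averaging (the paper's Lemma~2.6). Your Stage~2 is essentially correct and matches the paper. Stage~1, however, has a genuine gap.

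The problem is the transition region. You build $g_n$ by patching a local average of $f$ on $V_n=\{\omega_f<1/n\}$ with a base Baire-one approximant $h_n$ off $V_n$ via a cut-off, and then assert that at a continuity point $y_0$ \emph{both} pieces are $\varepsilon$-close to $f(y_0)$ on a fixed neighborhood $U$ for all large $n$. The local-average piece is fine. The $h_n$ piece is not: pointwise convergence of $h_n$ gives no uniform control of $h_n(y)$ for $y$ near $y_0$, so wherever the cut-off is not identically $1$ the value of $g_n$ is uncontrolled. And you cannot avoid this region by shrinking $U$: in general no fixed neighborhood of $y_0$ lies in $V_n$ for all large $n$. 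Take $Y=[0,1]$, $Z=\R$, and $f$ Thomae's function; then $Y\setminus V_n=\{p/q:q\le n\}$, and Dirichlet's theorem gives $\dist(y_0,Y\setminus V_n)<1/n$ for every irrational $y_0$ and every $n$. Thus every ball around $y_0$ meets $Y\setminus V_n$ (hence the collar where the cut-off is not $1$) for all large $n$, and your $g_n$ there genuinely depends on $h_n$. UCPC then fails unless $\{h_n\}$ already has it, which is precisely what Stage~1 is supposed to produce.

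The paper sidesteps this with a different mechanism. Rather than a cut-off between an $f$-average and $h_k$, it defines a lower semicontinuous convex-valued multimap $\Phi_k^{(j)}$ that equals $\{h_k(y)\}$ on the closed set where $h_k(y)$ already lies in the correct ``small-oscillation'' intersection of balls, and otherwise equals that intersection; then it takes a continuous $2^{-k}$-approximate selection via Michael's lemma. Near a continuity point $y_0$ the multimap itself is trapped in $\closure B_Z(f(y_0),2^{1-k})$ for all large $k$ on a fixed neighborhood of $y_0$, so the selection is forced close to $f(y_0)$ regardless of what $h_k$ does there. This is the missing idea in your Stage~1.

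A secondary gap: your claim that ``$\normZ{g_n(y')}$ is controlled locally by the local supremum of $\normZ{f}$'' is likewise unjustified, because on the transition region $g_n$ again involves the uncontrolled $h_n$. The paper handles local uniform boundedness separately (after securing UCPC) by composing with a continuous radial truncation at a variable level $r(y)=\inf_n\phi_n(y)$ built from distances to the complements of $\interior\{f\in B_Z(0,n)\}$; a single global radial projection to $\closure B_Z(0,n)$ does not suffice.
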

   In the case $Z=\R$,
   \cite[p.~605]{ALP} notes that
   the pointwise approximation of $f$
   by bounded Lipschitz functions was established in
   \cite[\S~41, pp. 264--276]{Hausdorff}
\bsrpenxv
   and \cite[Proposition 3.9 and before]{CzaszarLaczkovich}.
\esrpenxv

   \begin{proof}[Proof of Proposition~\ref{prop:Lip}]
   By the definition of Baire one functions,
   we can choose
   a~sequence of continuous functions
   $h_ n\fcolon Y\to Z$ such that $h_ n \to f$ pointwise.

   The proposition now follows from
   Lemma~\ref{l:UCPC} and Lemma~\ref{l:Lipschitz} below.
   \end{proof}

   The following two lemmata allow to replace $\{h_ n\}$
   in the proof of Proposition~\ref{prop:Lip}
   by a~sequence
   with the required properties.
   They are independent
   and Lemma~\ref{l:UCPC} can be skipped by readers not interested
   \bsrpenxv
   in properties \eqref{eq:continuity}
   \brijenxvbounded
   and \eqref{eq:boundedness}
   \erijenxvbounded
   of
   \esrpenxv
   Theorem~\ref{thm:B1ext},
   property UCPC
   \brijenxvbounded
   and the uniform boundedness of $\{f_n\}$.
   \erijenxvbounded

   Both lemmata use the fact that every metric space $Y$
   (as well as its arbitrary subspace)
   is paracompact
   (Theorem of A. H. Stone, see, e.g., \cite[p.~603]{Rudin-paracompact}).
   Note that a~topological space is called {\em paracompact} if every open cover
   of this space has a~locally finite open refinement.

\begin{lemma}\label{l:UCPC}
   Let $Y$ be a~metric space and $Z$ a~normed linear space.
   Given a~function $f\fcolon Y \to Z$ and
   a~sequence of continuous functions $h_ n\fcolon Y\to Z$,
   there exist continuous functions  $\tilde h_ n\fcolon Y \to Z$ such that

   \begin{enumerate}[\textup(a\textup)]
   \item
   $\tilde h_ n(y) \to f(y)$ whenever $y\in Y$ and   $h_ n(y) \to f(y)$;

   \item
   $\tilde h_ n(y) \to f(y)$ whenever $y\in Y$ and $f$ is continuous at $y$;

   \item
   $ ( \{ \tilde h_ n \} , f ) $ has UCPC,
   in particular
   $\{ \tilde h_ n \}$ has UCPC provided $f$ is a~pointwise limit of
   $\{ h_ n \}$.
\bledenredbounded
\eledenredbounded
\end{enumerate}
\end{lemma}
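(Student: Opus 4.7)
The plan is to construct $\tilde h_n$ as a continuous partition-of-unity combination of local values drawn either from the sequence $\{h_k\}$ (with indices tailored so the values approximate $f$ locally) or directly from $f$ at chosen sample points. Concretely, for each $n \in \N$ and each $y \in Y$, I first select an index $k(n,y) \ge n$ with $\normZ{h_{k(n,y)}(y) - f(y)} \le 1/n$ whenever this is possible (i.e.\ whenever $h_k(y) \to f(y)$), and set $k(n,y) = n$ otherwise. Then, using continuity of $h_{k(n,y)}$, I pick a radius $r(n,y) \in (0, 1/n]$ on whose ball $B(y, r(n,y))$ the function $h_{k(n,y)}$ has oscillation less than $1/n$. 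Paracompactness of $Y$ yields a locally finite open refinement $\{W_{n,\beta}\}$ of the cover $\{B(y, r(n, y)/3)\}_{y\in Y}$ with a subordinate continuous partition of unity $\{\psi_{n,\beta}\}$. For each $\beta$, I choose a center $z_{n,\beta} \in W_{n,\beta}$ satisfying $W_{n,\beta} \subset B(z_{n,\beta}, r(n, z_{n,\beta}))$; crucially, if $W_{n,\beta}$ meets $E := \{y \in Y : h_k(y) \to f(y)\}$, I insist that $z_{n,\beta} \in E$.

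With
\begin{equation*}
v_{n,\beta}(y) \;=\;
\begin{cases}
h_{k(n,z_{n,\beta})}(y) & \text{if } z_{n,\beta} \in E, \\
f(z_{n,\beta}) & \text{if } z_{n,\beta} \notin E,
\end{cases}
\end{equation*}
I set $\tilde h_n(y) := \sum_\beta \psi_{n,\beta}(y)\, v_{n,\beta}(y)$, which is continuous by local finiteness of the cover and continuity of each $v_{n,\beta}$. The key uniform estimate is
\begin{equation*}
\normZ{v_{n,\beta}(y) - f(z_{n,\beta})} \;<\; 2/n
\quad \text{for every } y \in W_{n,\beta},
\end{equation*}
which is trivial in the constant case and follows by combining the oscillation bound with $\normZ{h_{k(n,z_{n,\beta})}(z_{n,\beta}) - f(z_{n,\beta})} \le 1/n$ in the $h$-based case.

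Verification of (a)--(c) is then routine. For (a): if $h_k(y) \to f(y)$, then $y \in E$ and every $\beta$ with $y \in W_{n,\beta}$ has $z_{n,\beta} \in E$ by our selection rule, so $v_{n,\beta}(y) = h_{k(n,z_{n,\beta})}(y)$; since $k(n,z_{n,\beta}) \ge n$ and $h_k(y) \to f(y)$, $\normZ{v_{n,\beta}(y) - f(y)}$ is less than any prescribed $\varepsilon$ for $n$ large, uniformly in $\beta$, and hence $\tilde h_n(y) \to f(y)$. For (b) and (c) at a continuity point $y_0$ of $f$: whenever $y$ lies in a sufficiently small neighborhood of $y_0$, each relevant $z_{n,\beta}$ lies in a neighborhood of $y_0$ of size $O(1/n)$, and continuity of $f$ at $y_0$ gives $\normZ{f(z_{n,\beta}) - f(y_0)} < \varepsilon$ for $n$ large; combining with the $2/n$-estimate and $\sum_\beta \psi_{n,\beta}(y) = 1$ yields $\normZ{\tilde h_n(y) - f(y_0)} < \varepsilon + 2/n$ uniformly on the neighborhood, which is exactly UCPC (and in particular the required pointwise convergence).

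The principal delicacy is the joint choice of $z_{n,\beta}$: one needs $z_{n,\beta} \in W_{n,\beta} \cap E$ while preserving the enclosure $W_{n,\beta} \subset B(z_{n,\beta}, r(n, z_{n,\beta}))$, and the radii $r(n, z)$ can depend nontrivially on $z$. The factor~$1/3$ built into the initial cover leaves geometric slack for this, and the fallback to the constant value $f(z_{n,\beta})$ (used precisely when no admissible $z_{n,\beta} \in W_{n,\beta} \cap E$ is available) handles the remaining cases cleanly; note that in the important special case where $h_k \to f$ pointwise (so that $E = Y$) this technicality disappears altogether, and the entire proof reduces to checking the three estimates above.
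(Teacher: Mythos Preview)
Your approach is quite different from the paper's and has an attractive concreteness, but there is a genuine gap at exactly the point you flag as ``the principal delicacy''.

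The two constraints on $z_{n,\beta}$ --- membership in $W_{n,\beta}\cap E$ and the enclosure $W_{n,\beta}\subset B\bigl(z_{n,\beta},r(n,z_{n,\beta})\bigr)$ --- cannot in general be satisfied simultaneously, and the factor $1/3$ does not resolve this. The radius $r(n,z)$ is dictated by the oscillation of $h_{k(n,z)}$, and since the index $k(n,\cdot)$ varies from point to point, the radii $r(n,z)$ for $z\in W_{n,\beta}\cap E$ may all be much smaller than $r(n,y_\beta)$ (the $1/3$ slack only tells you $W_{n,\beta}\subset B\bigl(z,\tfrac{2}{3}r(n,y_\beta)\bigr)$, which says nothing about $r(n,z)$). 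Your fallback to the constant $f(z_{n,\beta})$ then breaks the verification of~(a): if $y\in W_{n,\beta}\cap E$ but the fallback is invoked, one gets $v_{n,\beta}(y)=f(z_{n,\beta})$, and when $f$ is discontinuous at $y$ --- precisely the situation where~(a) has content beyond~(b) --- there is no reason for $f(z_{n,\beta})$ to approach $f(y)$. Your own argument for~(a) tacitly assumes the $h$-based formula is used at every $W_{n,\beta}\ni y$, so the proof becomes internally inconsistent once the fallback enters. You are right that when $E=Y$ the difficulty evaporates (one can simply take $z_{n,\beta}=y_\beta$, the original cover centre, dropping the demand $z_{n,\beta}\in W_{n,\beta}$), but the lemma does not assume this.

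The paper avoids this tension by an entirely different mechanism. For each $k$ it builds a closed-convex-valued lower semi-continuous map $\Phi_k^{(k)}$ on $Y$: it equals $\{h_k(y)\}$ on the closed set where $h_k(y)$ already lies in a certain nested intersection $\widetilde\Phi_k(y)$ of balls determined by locally finite refinements of $\{\interior f^{-1}(B_Z(z,2^{-k}))\}$, and equals a slightly enlarged intersection $\widetilde\Phi_k^{(k)}(y)$ elsewhere. A continuous $2^{-k}$-approximate selection (Michael's lemma) produces $\tilde h_k$. The first alternative drives~(a), since eventually $h_k(y)\in\widetilde\Phi_k(y)$ whenever $h_k(y)\to f(y)$; the second drives~(b) and~(c), since at a continuity point the intersections shrink around $f(y_0)$. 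No simultaneous choice of a sample point with two competing properties is needed.
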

   In a~very special case ($Y$ a~compact
   metric space, $Z=\R$
   and $\{h_ n\}$ a~bounded sequence of continuous functions converging
   pointwise to a~function $f$),
   the lemma follows from the proof of \cite[Theorem~2.3, p.~214--215]{KL}.
   Moreover, in this case,
   functions
   $\tilde h_ n$ are obtained as convex combinations of $\{h_ n\}$.

\begin{proof}
   If $M\subset Y$ is a~set and $\varepsilon >0$, denote
$$
   \interior_ \varepsilon M = \{ y \in Y \setcolon 
              \dist (y, Y\setminus M ) \ge \varepsilon \}
  \subset M
 .
$$
  The set $\interior_ \varepsilon M$ is closed. Recall that $\interior M$ is an open set and denotes the topological interior of $M$.

   For $k\in \N$, 
   set $\HH_ k =\{ B_ Z(z, 2^{-k}) \setcolon z\in Z \}$ and
   $\GGGG_ k =\{ \interior f^{-1} (H) \setcolon
   H \in \HH_ k \}$.
   Then
   $\GGGG_ k$
   is an open cover
   of the (open) set $Y_ k := \bigcup \GGGG_ k \subset Y$.

   Let $\GG_ k$ be an open locally finite refinement
   of $\GGGG_ k$ that covers $Y_ k$.
   It is convenient to observe that $Y_ {k+1}\subset Y_ k$.
\label{NNN:odstraneno H_ k(G)}
   For every $G\in \GG_ k$,
   there is $z_ {k,G}\in Z$
   such that 
\begin{equation}\label{eq:fG}
      f(G) \subset  
        B_ Z( z_ {k,G}, 2^{-k} )
     . 
\end{equation}
   For $y\in Y$, the set 
$$
  \GG_ k(y) 
             := \{  G\in \GG_ k \setcolon y \in G \}
$$
   is finite; if $y\in Y\setminus Y_ k$ then $\GG_ k(y)$ is empty.
   Let 
$$
   \GG_ k^{(j)} (y) 
       = \{  G\in \GG_ k \setcolon y \in \interior_ {1/j} G \}
       \subset \GG_ k(y)
   .
$$
   Note that $Y_ k = \bigcup_ j Y_ k^{(j)}$, where
   $Y_ k^{(j)} = \bigcup \{ \interior_ {1/j} G \setcolon
       G\in \GG_ k 
       \}.$
   Let
\begin{align*}
   \widetilde\Phi_ k (y) & = \bigcap \{ 
                             \closure B_ Z( z_ {i,G}, 2^{-i}  )
                            \setcolon
                             i \in \N \cap [1,k],\ 
                             G\in \GG_ i (y) \}
   & k\in\N,\ y\in Y
   ,
\\
   \widetilde\Phi_ k^{(j)} (y) & = \bigcap \{
                             \closure B_ Z( z_ {i,G}, 2^{-i}  )
                            \setcolon
                             i \in \N \cap [1,k],\ 
                             G\in \GG_ i^{(j)} (y) \}
   & k,j\in\N,\ y\in Y
   ,
\end{align*}
  where we understand $\bigcap \emptyset = Z$.
  Note that $\widetilde\Phi_ k (y) \subset \widetilde\Phi_ k^{(j)} (y)$.
  Also note that
\begin{equation}\label{eq:m}
  \widetilde \Phi_ {k+1}^{(j)}(y) \subset \widetilde \Phi_ k^{(j)}(y)
   \text{\qquad and \qquad }
  \widetilde \Phi_ {k}^{(j+1)}(y) \subset \widetilde \Phi_ k^{(j)}(y)
.
\end{equation}
  Later, we prove that $\widetilde\Phi_ k^{(j)} (y)$ is a~lower semi-continuous
  multivalued map, but we do not make such a~claim for $\widetilde\Phi_ k (y)$.
  Let
$$
    C_ k
    :=\{ y \in Y \setcolon 
                      h_ k(y) \in \widetilde\Phi_ k (y)
   \}
$$
Then $C_ k$ is a~closed set.
   Indeed
   if $y_ 0 \notin C_ k$, then the closed set $F:=\widetilde \Phi_ k (y_ 0)$ 
   does not contain $h_ k(y_ 0)$, and the same is true
   for $h_ k(y)$ with $y$ in a
   neighborhood $U_ 1$ of $y_ 0$.
   Since the elements of the finite set $\GG_ i(y_ 0)$ ($i=1,\dots,k$) are open,
   we have $\widetilde \Phi_ k (y) \subset F$ for $y$ in a~
   neighborhood $U_ 2$
   of $y_ 0$.
   Therefore, $y_0\in U_ 1\cap U_ 2 \subset Y\setminus C_ k$.
   Since $y_0 \notin C_ k$ was arbitrary, 
   $C_ k$ is a~closed set.

   Let
\begin{numcases}{ \Phi_ k^{(j)} (y) = }
    \label{eq:def1}
                     \{ h_ k(y) \}
                     &  if 
                        $ y \in C_ k $
                \\
    \label{eq:def2}
                     \widetilde\Phi_ k^{(j)} (y) 
                     &  otherwise.
\end{numcases}
   Then $\Phi_ k^{(j)} (y)$ is a~closed convex set in $Z$,
   $f(y) \in \widetilde \Phi_ k (y) \subset
             \widetilde \Phi_ k^{(j)} (y)$
   and $\Phi_ k^{(j)} (y)$ contains $f(y)$ or $h_ k(y)$. 
   We have
\begin{equation}\label{eq:PhiSubset}
   \Phi_ k^{(j)}(y) \subset \widetilde \Phi_ k^{(j)} (y)
\end{equation}
   for every $y\in Y$
   independently of which of \eqref{eq:def1}, \eqref{eq:def2}
   happens to be true.

   From the definitions and from \eqref{eq:fG},
\begin{equation}\label{eq:inball-F2}
            \widetilde \Phi_ k^{(j)} (y)
            \subset
                    \closure B_ Z( z_ {k,G},    2^{-k} )
            \subset \closure B_ Z( f(y_ 0), 2^{1-k} )
\end{equation}
   whenever $G\in \GG_k$ and
   $y, y_ 0 \in \interior_ {1/j} G$.

   By the definition of $\widetilde \Phi_ k^{(j)}$ and
   since
   $Y \setminus \interior_ {1/j} G$
   is open
   for every $ G\in \GG_ i $
   and $\GG_ i$ is locally finite ($i=1,\dots, k$),
   we can show that
   the multivalued map
   $\widetilde\Phi_ k^{(j)} \fcolon Y \to Z$ is lower semi-continuous.
   Even more, we show that
   for any set $M\subset Z$,
\begin{equation}\label{eq:hitspoint}
   \left( \widetilde\Phi_ k^{(j)} \right) ^{-1} (M) 
   = \{ y \in Y \setcolon M \cap \widetilde\Phi_ k^{(j)} (y) \neq 
   \emptyset \}
   \qquad
   \text{is open.}
\end{equation}
   This is obviously true if the set in \eqref{eq:hitspoint} is open
   for every singleton
   $M=\{z\} \subset Z$.
   Let $M=\{z\} \subset Z$.
   If $y_0\in Y$ is fixed
      so that $ z \in \widetilde\Phi_ k^{(j)} (y_0)$,
   let
     $U$ be a~neighborhood of $y_0$ such that
     $\GG_ {i,U} := \{ G \in \GG_i \setcolon G \cap U \neq \emptyset \}$
     is finite
     for all $i=1,\dots,k$,
   and let
$$
    H
        \
      :=
        \
        \bigcap _{i=1}^k
        \quad
        \bigcap
          _ {
             G
             \in \GG_ {i,U}
             \, \setcolon \,
             z \notin \closure B_ Z( z_ {i,G}, 2^{-i}  )
            }
        \quad
          Y \setminus \interior_ {1/j} G
    .
$$
   Then $ z \in \widetilde\Phi_ k^{(j)} (y) $
   for every $y \in U \cap H$,
   which is an open neighborhood of $y_0$.
   Thus the set in \eqref{eq:hitspoint} is indeed open.

   Also $\Phi_ k^{(j)}$ is lower semi-continuous.
   Indeed,
   if $O \subset Z$ is open, $y_ 0\in Y$ and $\Phi_ k^{(j)} (y_ 0) \cap O$
   contains a~point $z$, we have two cases to consider:

   1. If $y_ 0 \in C_ k$, i.e.\ $\Phi_ k^{(j)} (y_ 0) = \{ h_ k(y_ 0) \}$,
   we note that 
   $\widetilde \Phi_ k^{(j)} (y_ 0) 
   \supset \widetilde \Phi_ k  (y_ 0)$
   contains $h_ k(y_ 0) = z \in O$.
   Since the set in \eqref{eq:hitspoint} is open with $M=\{z\}$ we have, 
   for $y$ in a~neighborhood $V_ 1$ of $y_ 0$,
   $z \in  \widetilde \Phi_ k^{(j)} (y) \cap O $.
   Since $h_ k$ is continuous, we have  
   $\{ h_ k(y) \} \subset O $
   for $y$ in a~neighborhood $V_ 2$ of $y_ 0$.
   Thus, for $y\in V_ 1\cap V_ 2$,
   $\Phi_ k^{(j)} (y) \cap O \neq \emptyset$,
   independently of which of \eqref{eq:def1}, \eqref{eq:def2}
   applies at $y$.

   2. If $ y_ 0 \in Y\setminus C_ k $, then we use that
   \eqref{eq:def2} applies on the open set $Y\setminus C_ k$ and
   that the set in \eqref{eq:hitspoint} is open with $M=O$.

   Hence, $\Phi_ k^{(j)}$ is lower semi-continuous and
   from \cite[Lemma 4.1]{Michael-CS1-1956}\footnote{\relax
             p.~368. $\mathcal K$ denotes (say, nonempty) convex sets (see p.~367).}
   it follows that there exists a~continuous function
   $\varphi_ k^{(j)} \fcolon Y \to Z $
   such that
\begin{equation}\label{eq:almostselection}
   \varphi_ k^{(j)} (y) \in  \Phi_ k^{(j)}(y) + B_ Z(0,2^{-k})
\end{equation}
   for every $y\in Y$.

   Obviously,
\begin{equation}\label{eq:approxOnCk}
	\normZ{\varphi_ k^{(j)} (y) - h_ k(y)} < 2^{-k}
\end{equation}
  for every $y\in C_ k$.

   Let $\tilde h_ k= \varphi_ k^{(k)}$.

   Now we want to show that 
   for every $y_ 0\in \bigcap_ i Y_ i$
   and $i_ 0 \in \N$, 
   there is $k_ 1$ and a~neighborhood $U$ of $y_ 0$ such that
\begin{equation}\label{eq:dzzzz}
   \normZ{
   \tilde h_ k(y) -
   f(y_ 0)
   }
   \le
   2^{2-i_ 0}
\end{equation}
   whenever $k\ge k_ 1$ and $y\in U$.

   Let $y_ 0\in \bigcap_ i Y_ i$.
   Since $Y_ {i_ 0}=\bigcup_ j Y_ {i_ 0}^{(j)}$,
   there is $j_ 0 \in \N$ such that
   $ y_ 0 \in Y_ {i_ 0}^{(j_ 0)}$.
   Thus,
   there is $G\in \GG_ {i_0}$ such that $y_ 0 \in \interior_ {1/j_0} G$.
   Obviously, $U:=B_ Y(y_ 0, 1/ 2j_0) \subset \interior_ {1/ 2j_0} G$.
   If $k\ge k_ 1 := \max( i_ 0, 2j_ 0)$, we have
   by
   \eqref{eq:almostselection}
   that,
   for every $y\in U$,
   the set
   $\tilde h_ k (y) + B_ Z(0, 2^{-k})$
   intersects
$$
   \Phi_ k^{(k)} (y)
\overset{\eqref{eq:PhiSubset}}
\subset
   \widetilde \Phi_ k^{(k)} (y)
\overset{\eqref{eq:m}}
\subset
   \widetilde \Phi_ {i_ 0}^{(2 j_ 0)} (y)
\overset{\eqref{eq:inball-F2}}
\subset
   \closure B_ Z( f(y_ 0), 2^{1-i_ 0} )
,
$$
and hence
$$
  \normZ{
   \tilde h_ k(y)
   -
   f(y_ 0)
  }
  \le
   2^{1-i_ 0}
   +
   2^{-k}
  \le
   2^{2-i_ 0}
    .
$$
   This finishes the proof of \eqref{eq:dzzzz}.

   Assume $y_ 0 \in Y$ and $h_ n(y_ 0) \to f(y_ 0)$.
   We want to show that $\tilde h_ n(y_ 0) \to f(y_ 0)$.

   1.
   If $y_ 0 \notin  \bigcap_ i Y_ i$, then there is $k_ 0$ such that 
   $y_ 0 \notin Y_ k$ for all $k\ge k_ 0$.
   We have
\begin{equation}\label{eq:constantB}
   \widetilde \Phi_ k(y_ 0) =  \widetilde \Phi_ {k_ 0} (y_ 0) =: B
   \qquad
   \text{ for all $k \ge k_ 0$}
   .
\end{equation}
   The set $B$  (defined by \eqref{eq:constantB}) is an intersection of a~finite number
   of (closed) balls with $f(y_ 0)$ in the interior --- recall
   the inclusion $f(G) \subset 
       B_ Z( z_ {k,G}, 2^{-k} ) 
   $ (see \eqref{eq:fG}).
   Since $h_ k(y_ 0) \to f(y_ 0)$, there is $k_ 1 \ge k_ 0$ such that,
   for all $k\ge k_ 1$, 
   $h_ k(y_ 0) \in B$, $y_0 \in C_ k$ (see \eqref{eq:constantB}) and $\normZ{ \tilde h_ k(y_ 0) - h_ k(y_ 0) } < 2^{-k}$
   by~\eqref{eq:approxOnCk}.
   Since $h_ k(y_ 0) \to f(y_ 0)$, we have also
   $\tilde h_ k(y_ 0) \to f(y_ 0)$.

   2. Assume that $y_ 0 \in \bigcap_ i Y_ i$ and $i_ 0 \in \N$ is given.
   Then we have \eqref{eq:dzzzz} for $y = y_ 0$ and for $k$ large enough.
   Since $i_ 0$ was arbitrary, we proved $\tilde h_ k(y_ 0) \to f(y_ 0)$.

   Assume now that $f$ is continuous at $y_ 0 \in Y$.
   Then $y_ 0\in \bigcap_ i Y_ i$
   by the definition of $\GGGG_ {k}$, $\GG_ k$ and $Y_k$.
   Therefore, $\tilde h_ k(y_ 0) \to f(y_ 0)$ by the previous paragraph.
   Moreover, 
   for every $i_ 0$, 
   there is $k_ 1$ and a~neighborhood $U$ of $y_ 0$ such that
   \eqref{eq:dzzzz} holds true for all $k\ge k_1$ and $y\in U$.
   Since $y_ 0$ was an arbitrary point of continuity of $f$,
   this proves
   that $ ( \{\tilde h_ k\}, f ) $ has property UCPC.
\end{proof}

\begin{lemma}\label{l:Lipschitz}
   Let $Y$ be a~metric space and $Z$ a~normed linear space.
   Given a~function $f\fcolon Y \to Z$ and
   a~sequence of continuous functions $h_ n\fcolon Y\to Z$,
   there exist bounded locally Lipschitz functions  $f_ n\fcolon Y \to Z$ such that
\begin{enumerate}[\textup(a\textup)]%
\item\label{item:l:Lipschitz:pointwise}
   $f_ n(y) \to z$ whenever $y\in Y$ and   $h_ n(y) \to z\in Z$;

\item\label{item:l:Lipschitz:UCPC}
   $(\{ f_ n \}, f)$
   has UCPC
   if
   $(\{ h_ n \}, f)$
   has UCPC;

\item\label{item:l:Lipschitz:unif-bounded}
\brijenxvbounded
   $\{ f_n \}$
   is uniformly bounded
   on $B_Y(a, r)$
   whenever
   $a\in Y$,
   $r\in (0,\infty)\cup\{\infty\}$,
   $h_n \to f$ pointwise on $B_Y(a,  2 r)$
   and $f$
   is bounded on $B_Y(a, 2 r)$.
\erijenxvbounded
\end{enumerate}
\end{lemma}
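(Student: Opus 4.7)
My plan is to build each $f_n$ as a locally Lipschitz, bounded approximation of the continuous function $h_n$ via a partition-of-unity argument based on paracompactness of $Y$, composed with a truncation to enforce boundedness.

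Fix $n$. By continuity of $h_n$, for each $y\in Y$ there is $\delta_n(y)\in(0,1/n)$ with $h_n(B_Y(y,\delta_n(y)))\subset B_Z(h_n(y),1/n)$. By the Stone paracompactness theorem, the open cover $\{B_Y(y,\delta_n(y))\}_{y\in Y}$ admits a locally finite open refinement $\{V_{n,\alpha}\}_{\alpha}$; for each $\alpha$ I pick $y_{n,\alpha}\in Y$ with $V_{n,\alpha}\subset B_Y(y_{n,\alpha},\delta_n(y_{n,\alpha}))$. The standard formula
\[
\psi_{n,\alpha}(y):=\frac{\dist(y,Y\setminus V_{n,\alpha})}{\sum_{\beta}\dist(y,Y\setminus V_{n,\beta})}
\]
defines a locally Lipschitz partition of unity subordinate to $\{V_{n,\alpha}\}$, since the denominator is a locally finite sum of $1$-Lipschitz functions bounded below by a positive constant in a neighborhood of each point. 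Setting
\[
\tilde f_n(y):=\sum_{\alpha}\psi_{n,\alpha}(y)\,h_n(y_{n,\alpha}),
\]
one obtains a locally Lipschitz map which, being a convex combination of values each within $1/n$ of $h_n(y)$, satisfies $\|\tilde f_n(y)-h_n(y)\|<1/n$ everywhere. To force boundedness, let $T_n\fcolon Z\to\closure B_Z(0,n)$ be the $1$-Lipschitz radial retraction onto $\closure B_Z(0,n)$, and set $f_n:=T_n\circ\tilde f_n$; then $f_n$ is locally Lipschitz and bounded by $n$.

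Properties (a) and (b) follow immediately from the $1/n$-closeness and the fact that the truncation level grows to infinity. For (a): if $h_n(y)\to z$, then $\|h_n(y)\|\le\|z\|+1$ for $n$ large, hence $\|\tilde f_n(y)\|<n$ and $f_n(y)=\tilde f_n(y)$, so $f_n(y)\to z$. For (b): at a continuity point $y_0$ of $f$, the UCPC hypothesis for $(\{h_n\},f)$ provides, for any $\varepsilon>0$, a neighborhood $U$ and $k_0$ such that $\|h_k(y)-f(y_0)\|<\varepsilon$ for $k\ge k_0$ and $y\in U$; thus $\|h_k(y)\|\le\|f(y_0)\|+\varepsilon$ on $U$, so $T_k$ becomes inactive on $U$ for $k$ large, and the triangle inequality gives $\|f_k(y)-f(y_0)\|<\varepsilon+1/k$, verifying UCPC for $(\{f_k\},f)$.

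The main obstacle is (c). The bound $\|f_n\|\le n$ is not uniform in $n$, and indeed with the naive choice $z_{n,\alpha}=h_n(y_{n,\alpha})$ one can build examples with $f\equiv0$ on $B_Y(a,2r)$ yet $\sup_n\|f_n\|_{B_Y(a,r)}=\infty$ (think of continuous tent functions of height $n$ centered at moving points). To address this, I would refine the construction: additionally require $\diam V_{n,\alpha}<1/n$, so that for $n>2/r$ every $V_{n,\alpha}$ meeting $B_Y(a,r)$ lies in $B_Y(a,2r)$; and choose $y_{n,\alpha}$ inside $V_{n,\alpha}$ so that $\|h_n(y_{n,\alpha})\|$ is within $1/n$ of $\inf_{y'\in V_{n,\alpha}}\|h_n(y')\|$, giving $\|\tilde f_n(y)\|\le\|h_n(y)\|+1/n$ whenever $y\in V_{n,\alpha}$. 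The delicate step, which I expect to be the principal technical difficulty, is to convert the pointwise bound $\|h_n(y)\|\le\|f(y)\|+1$ (available only for $n$ large depending on $y$) into a bound uniform in $n$ on $B_Y(a,r)$. I expect this to require replacing the global truncation $T_n$ by an adaptive, locally Lipschitz truncation whose level at $y$ reflects the joint behavior of $\{h_m\}_{m\ge n}$ near $y$ (for example, built from a locally Lipschitz regularization of $y\mapsto \inf_{m\ge n}\|h_m(y)\|$), together with a direct modification of finitely many $f_n$ to cap their sup-norms without affecting (a) or (b).
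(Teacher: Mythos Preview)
Your arguments for (a) and (b) are correct and essentially match the paper's (you apply the partition-of-unity step before the radial truncation, the paper does it after, but this is immaterial).

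The gap is in (c), and your proposed fixes do not close it. First, ``direct modification of finitely many $f_n$'' cannot work: the conclusion of (c) must hold \emph{simultaneously} for every pair $(a,r)$ satisfying the hypothesis, and these pairs form an uncountable family with varying bounds on $f$; there is no single finite set of indices whose modification handles them all. Second, an adaptive truncation built from a regularization of $y\mapsto\inf_{m\ge n}\|h_m(y)\|$ is dangerous for (a): if $h_n(y)\to z$, that infimum may sit strictly below $\|z\|$ for every $n$ (e.g.\ $h_{2k}(y)=z$, $h_{2k+1}(y)=z(1-1/k)$), and any Lipschitz smoothing that lowers values further will make the truncation bite indefinitely and destroy convergence to $z$. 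Your preliminary refinement (small diameters, near-minimizing centers) only yields $\|\tilde f_n(y)\|\le\|h_n(y)\|+1/n$, which is no better than the $1/n$-closeness you already have and does not bound $\sup_n\|h_n(y)\|$.

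What the paper does is build a \emph{single} continuous truncation radius $r\colon Y\to[1,\infty]$, independent of $n$, from the geometry of the \emph{limit} $f$ rather than from the approximants. Concretely, with $O_p:=\interior\{x:h_m(x)\to f(x),\ \|f(x)\|<p\}$ one sets $\phi_p(x)=p+1+1/\dist(x,Y\setminus O_p)$ and $r(x)=\inf_p\phi_p(x)$; then $\tilde h_n(x):=P_{r(x)}(h_n(x))$. The point is that (i) $r(x)>\|f(x)\|$ wherever $h_n(x)\to f(x)$, so the truncation is eventually inactive and (a) survives; (ii) if $f$ is bounded by $p_0$ on $B_Y(a,2r)$ then $B_Y(a,2r)\subset O_{p_0}$, hence $r(\cdot)\le p_0+1+1/r$ on $B_Y(a,r)$, giving (c) for all such balls at once; (iii) continuity of $r$ preserves continuity of $\tilde h_n$, and a check at continuity points of $f$ shows UCPC persists. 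Only after this $n$-independent truncation does the paper pass to locally Lipschitz $f_n$ via the partition of unity, which then inherits the uniform bound trivially from $\|f_n-\tilde h_n\|\le 2/n$. The missing idea in your proposal is precisely this: the truncation level must be read off from $f$ and the open sets $O_p$, not from the tail of $\{h_m\}$.
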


\begin{proof}
   The boundedness is very easy to achieve.
   Let  $\tilde h _ n = P_ n \circ h_ n$ where $P_ n$ is the radial projection
   of $Z$ onto $\closure B_ Z(0, n)$: 
\begin{align}\label{eq:radialprojection}
   P_ n(z)
   =
   \begin{cases}
     z ,              & z\in B_ Z(0, n) ;
\\
     n z / \normZ{ z } ,   & z \in Z \setminus B_ Z(0, n)
   .
   \end{cases}
\end{align}
   \bsrpenxv
   Every
   \esrpenxv
   $\tilde h_ n$ is obviously bounded while it retains other properties mentioned in
   Lemma~\ref{l:Lipschitz}(\ref{item:l:Lipschitz:pointwise}) and (\ref{item:l:Lipschitz:UCPC}).
   Henceforth we label them $h_n$ and assume they are bounded.

   Now we provide the local uniform boundedness
   property
\bcervenec
   requested by Lemma \ref{l:Lipschitz}(\ref{item:l:Lipschitz:unif-bounded})
\ecervenec
   which
   is slightly more complicated.
   \bsrpenxv
   (Note
   that this is needed only in supplementary parts of our application to differentiable
   extensions~\cite{KocKolarDer}.)
   \esrpenxvunskip

 For $n\in \N$, let
 $O_n = \interior \{ x \in Y \setcolon h_m(x) \to f(x) \text{ and } f(x) \in B_Z(0,n) \}$,
\bcervenec
 \begin{align*}
     \phi_n(x)
     &=
     \begin{cases}
        (n+1)  + 1/\dist(x,Y\setminus O_n)  &  \text{if }x\in O_n
        \\
        \infty   &  \text{if } x\in Y\setminus O_n
     ,
     \end{cases}
  \\
  \noalign{\noindent  and}
     r(x) &= \inf \{ \phi_n(x) \setcolon n \in \N\} \ge 1
     ,
     \qquad
     x\in Y
   .
 \end{align*}
\ecervenec

 Note that for every $x\in Y$ there is $n_0$ such that $\normZ{ h_n(x) } \le r(x)$ for every $n\ge n_0$.
 Indeed, if $h_n(x) \to f(x)$ and
 \brijenxv
 $m_0$
 \erijenxv
 is the least integer such that
 \brijenxv
 $f(x) \in B_Z(0,m_0)$
 then
 $x\in O_n$ for no $n < m_0$, hence
 (regardless if $x\in O_n$ or not, for various $n\ge m_0$)
 $$
  r(x)
  = \inf_{n\ge 1} \phi_n(x)
  = \inf_{n\ge m_0} \phi_n(x)
  \ge \inf_{n\ge m_0} n + 1
  =
  m_0 + 1 > \normZ{f(x)} \leftarrow \normZ{h_n(x)}
  .
 $$
 \erijenxv
 Otherwise,
 $r(x)=\infty$
 \bsrpenxv
 by the definition of $O_n$ and $\phi_n$.
 \esrpenxv

 If $r(x) < \infty$, then $r$ is bounded on a~neighborhood $U$ of $x$ (because each $\phi_n$ is continuous)
 and (since
 \bsrpen
 $\phi_n\ge n+1$)
 \esrpen
 there is $n_0$ such that
 $r = \min(\phi_1, \phi_2, \dots, \phi_{n_0})$ on $U$.
 Hence $r$ is continuous at $x$.
 If $r(x) = \infty$ then, for every $n\in \N$, $x\in Y \setminus O_n$ and hence $\phi_n(\cdot) \ge (n+1) + 1/\ddd(\cdot,x)$.
 Therefore $r(\cdot) \ge (n+1)
 + 1/\ddd(\cdot,x)$ and $r$ is again continuous at $x$.

 If $P_r$ ($r\ge 1$) is the radial projection from \eqref{eq:radialprojection}, then
 the map
 $(z,r) \to P_r(z)$ is
 $1$-Lipschitz on $Z\times [1,\infty)$
 and
 continuous on $Z\times [1,\infty]$.
 Indeed, its continuity at every point $(z,\infty)\in Z\times [1,\infty]$ is obvious.

 Letting
 \begin{equation}\label{eq:tilde-for-unifbdd}
 \tilde h_n(x) = P_{r(x)} (h_n(x))
 \end{equation}
 we obtain a~new sequence of functions that retains the boundedness
 \bkoneclistopaduxvH
 and
 \ekoneclistopaduxvH
 continuity
 properties
 of $\{h_n\}$.
 \bkoneclistopaduxvH
 We show that it also retains pointwise convergence
 as required by
 Lemma~\ref{l:Lipschitz}\itemref{item:l:Lipschitz:pointwise}.
 Assume that $x\in Y$, $z\in Z$
 and $h_n(x) \to z$.
 We need to check that
 $\tilde h_n(x) \to z$.
 If $r(x)=\infty$, it is obvious from~\eqref{eq:tilde-for-unifbdd}.
 If $r(x)<\infty$ then
 we need to look at the values of $\phi_n(x)$.
 If $n\in \N$ and $\phi_n(x)$ is finite
 then
 \ekoneclistopaduxvH
 necessarily $x\in O_n$,
 $f(x)=z$,
 \bkoneclistopaduxvH
 and hence
 \ekoneclistopaduxvH
 $\normZ{z} < n < \phi_n(x)$.
 \bkoneclistopaduxvH
 Thus we get
 $\normZ{z} < \phi_n(x)$
 for every $n$ and
 $\normZ{z} \le r(x)$.
 This concludes the proof of
 \ekoneclistopaduxvH
 $\tilde h_n(x) \to z$.

 Assume now that
 $(\{ h_n \}, f)$ has UCPC.
 We will prove that $(\{ \tilde  h_n \}, f)$ has UCPC.
 Assume that
 $f$ is continuous at $y_0\in Y$
 and
 $\varepsilon \in (0,1)$.
 Then
 $h_n(y_0) \to f(y_0)$ (cf.\ \eqref{eq:hkyfy0})
 and
 there is $k_0\in \N$ and a~neighborhood $U$ of $y_0$ such that
 \eqref{eq:hkyfy0} is true for $k\ge k_0$ and $y\in U$.
 We choose $n_0$ to be the least integer such that $f(y_0) \in B_Z(0,n_0)$.
 Then
 there is a~neighborhood $V$ of $y_0$ such
 that,
 \bsrpenxv
 for every $y\in V$,
 \esrpenxv
 $f(y) \in B_Z(0,n_0)\setminus B_Z(0, n_0-2)$,
 $y \notin O_{n_0-2}$
 and
 \begin{equation}\label{eq:MartinPodtrhl}
 r(y) \ge (n_0 - 1) + 1
 .
 \end{equation}
 For $y \in U\cap V$ and $k \ge \max(k_0, n_0)$
 we have $\tilde h_k (y) = h_k(y)$ and hence \eqref{eq:hkyfy0} is true also when
 $h_k$ is replaced by $\tilde h_k$.
 Hence $(\{ \tilde  h_n \}, f)$ has UCPC
 if $(\{  h_n \}, f)$  has UCPC.

\brijenxvbounded
   Finally we want to prove that
   $\{\tilde h_n\}$ from \eqref{eq:tilde-for-unifbdd}
   satisfies the
   \bkoneclistopaduxvH
   local
   \ekoneclistopaduxvH
   uniform boundedness
   property requested by Lemma \ref{l:Lipschitz}(\ref{item:l:Lipschitz:unif-bounded}).
  Assume
  that
  $a\in Y$,
  $r\in (0,\infty)\cup\{\infty\}$,
   $h_n \to f$ pointwise on
   $W:=B_Y(a, 2 r)$
   and
   there is $p_0\in \N$ such that
   $\normZ{ f(y) } < p_0$
   for all $y\in W$.
  Then obviously $W \subset O_{p_0}$.
  For every $y\in B_Y(a, r)$,
  we have
  $\dist(y, Y\setminus O_{p_0}) \ge r$
  and,
  by \eqref{eq:tilde-for-unifbdd} and \eqref{eq:radialprojection},
  $\normZ{ \tilde h_n(y) }
  \le
  r(y)
  \le
  \phi_{p_0}(y) \le M := p_0 + 1 + 1/r$.
\erijenxvbounded

 This closes the part of the proof
\brijenxv
  where we obtained $\{\tilde h_n\}$ with
\erijenxv
 the local uniform boundedness property
\bcervenec
   (Lemma~\ref{l:Lipschitz}(\ref{item:l:Lipschitz:unif-bounded}))
   while
   boundedness and the properties mentioned in
   Lemma~\ref{l:Lipschitz}(\ref{item:l:Lipschitz:pointwise}) and (\ref{item:l:Lipschitz:UCPC})
   are retained
\brijenxv
   by $\{\tilde h_n\}$.
\erijenxv
\ecervenec

 \medbreak

 Note that
 ``locally Lipschitz'' is the only property that we miss at this point.
   To replace the functions by locally Lipschitz ones
   is rather straightforward
   when the paracompactness of $Y$ is used.
   \brijenxv
   Though, formal argument takes at least several lines for each of the properties.
   \erijenxv

   Let
   \brijenxv
   $n\in \N$
   and let
   \erijenxv
   $\mathcal U_ n$
   be an open locally finite refinement
   of open cover
   $$
    \{B_ Y(x, \delta/2) \setcolon x\in Y,\ \delta\in(0,1/n)
   \text{ such that } \tilde h_ n(B_ Y(x,\delta)) \subset B_ Z(\tilde h_ n(x), 1/n) \}
   .
   $$
   Choose $x_ {n,U}\in U$ for every $U \in \mathcal U_ n$.
   Let $w_ {n,Y}(y)=1$ for all $y\in Y$ (or, suppose $Y\notin \mathcal U_ n$).
   For $y\in Y$ and $U\in \mathcal U_ n \setminus \{Y\}$ let 
\begin{align}
   \notag
   \mathcal U_ n(y) &= \{ V\in \mathcal U_ n \setcolon y \in V \},
   \displaybreak[0]
   \\\notag
   w_ {n,U}(y)
   =
   w_ U (y)
   &= \dist(y,Y\setminus U),
   \displaybreak[0]
   \\\label{eq:Wn}
   W_ n(y) &= \sum_ { U\in \mathcal U_ n(y) }
                w_ {n,U}(y),
   \\\label{eq:fn}
   f_ n(y) &= \sum_ { U\in \mathcal U_ n(y) }
             \,
             \frac{ w_ {n,U}(y) }{ W_ n(y) } \, \tilde h_ n(x_ {n,U})
	     .
\end{align}
   Then $W_ n$ is locally bounded away from zero,
   the sums in \eqref{eq:Wn}, \eqref{eq:fn} are locally finite,
   $w_ {n,U}$, $W_ n$ and $f_ n$ are locally Lipschitz and locally bounded.
   For every $y\in Y$,
   we have
$$ 
     f_ n(y) \in \cvx  \tilde h_ n\bigl( \{ x_ {n,U} \setcolon U \in \mathcal U_ n(y) \}
                         \bigr)
                \subset
                \cvx  \tilde h_ n\left( \textstyle \bigcup \mathcal U_ n(y) \right)
		\subset
		 B_ Z(\tilde h_ n(y), 2/n)
     ,
$$
   and thus
\begin{equation}\label{eq:blizko}
   \normZ{ f_ n (y) - \tilde h_ n  (y)} \le 2/n.
\end{equation}
\brijenxv
   Therefore
   $f_ n(y) \to z$
   whenever
   $h_ n(y) \to z \in Z$
   (which implies $\tilde h_ n(y) \to z$).
\erijenxv

\brijenxvbounded
   If $\tilde h_ n(y)$ are uniformly bounded on any given set, so are $f_ n$
   by \eqref{eq:blizko}.
   Since we already have validity of
   Lemma~\ref{l:Lipschitz}(\ref{item:l:Lipschitz:unif-bounded})
   for $\{\tilde h_n\}$, it is also valid for $\{f_n\}$
   defined by \eqref{eq:fn}.
\erijenxvbounded

   \brijenxv
   It remains to prove
   (\ref{item:l:Lipschitz:UCPC}).
   Assume $(\{h_n\},f)$ has UCPC.
   Then we already know that $(\{ \tilde h_n \}, f)$ has UCPC.
   \erijenxv
   Let $y_ 0\in Y$,
   $\varepsilon >0$,
   $k_ 0\in \N$ and $\delta > 0$
   be given
   such that
\begin{equation}\label{eq:epsdeltaN}
    \normZ{ \tilde h_ k(y) - f(y_ 0) } < \varepsilon
\end{equation}
   for every $k\ge k_ 0$ and $y \in B_ Y(y_ 0,\delta)$. 
   Let $n_ 0 \in \N$ satisfy
   $ 2/n_ 0 < \varepsilon $ and $n_ 0 \ge k_ 0$.
   Then,
   by \eqref{eq:blizko} and \eqref{eq:epsdeltaN},
\begin{equation}\label{eq:epsdeltaZ}
    \normZ{ f_ k(y) - f(y_ 0) } < 2 \varepsilon
\end{equation}
   for every $k\ge n_ 0$ and $y \in B_ Y(y_ 0,\delta)$. 
   Thus, $ ( \{f_ k\} , f ) $ has UCPC if $ ( \{h_ k\} , f ) $ has UCPC.
\bcervenec
   We see that we obtained locally Lipschitz functions
   while
   boundedness and the properties mentioned in
   Lemma~\ref{l:Lipschitz}(\ref{item:l:Lipschitz:pointwise}), (\ref{item:l:Lipschitz:UCPC})
   and (\ref{item:l:Lipschitz:unif-bounded})
   are retained.
\ecervenec
\end{proof}

\section{Extensions of Baire one functions}\label{sec:B1ext-}

Now we use Proposition~\ref{prop:Lip} and
\brijenxv
an elaborated refinement of
\erijenxv
the method of \cite[Theorem~6]{ALP} to prove Theorem~\ref{thm:B1ext}.
\begin{proof}[Proof of Theorem~\ref{thm:B1ext}]
   Applying  Proposition~\ref{prop:Lip} to $Y:=H$ and
   $f\fcolon H\to Z$, we get a~sequence $f_ n\fcolon H\to Z$ of
   bounded
   \JKblue locally \eJKblue 
   Lipschitz
   functions converging pointwise to $f$ on $H$
   \JKblue
   such that $ ( \{ f_n \} , f ) $ has property UCPC
   \eJKblue
\brijenxvbounded
   and such that $\{ f_n \}$
   is uniformly bounded
   on $B(a,6 r) \cap H$
   whenever
   $a\in H$,
   $r\in (0,\infty)\cup\{\infty\}$
   and $f$
   is bounded on $B(a, 12 r) \cap H$.
\erijenxvbounded

   Let $1\le M_ 1 \le M_ 2 \le \dots$ 
   be such that $\sup_{y\in H} \normZ{f_ n(y)}\le M_ n$.
   \JKblue
   For
   \eJKblue
   every $x\in X\setminus H$,
   we select a~point $u(x) \in H$ with $\ddd(x,u(x)) < 2\,\dist(x,H)$.
   \bkoneclistopaduxvH
   Then,
   for every $a\in H$ and $x\in X\setminus H$,
\begin{equation}\label{eq:generalInequality}
   \dist(x,H) \le \ddd(x,a)
   \qquad
   \text{and}
   \qquad
   \ddd(a,u(x)) \le 3 \ddd(a,x)
   .
\end{equation}
   Indeed,
   $
   \ddd(a, u(x))
   \le \ddd(a,x) + \ddd(x,u(x))
   \le \ddd(a,x) + 2 \dist(x,H)
   \le 3 \ddd(a,x)
   $.
   For $x\in X\setminus H$, let
   \ekoneclistopaduxvH
   \JKblue
\begin{equation}\label{eq:Kxn}
   K_ {x,n}=\max(1, \Lip f_ n |_ {B_ H(u(x), (n M_ n + 2)\dist(x,H))}), \qquad n\in \N
  .
\end{equation}
   Note that $K_ {x,n}$ might be infinite.
   Define $1/\infty=0$. %
   Let $n(x)$ be the largest $n\in \N$ such that 
\begin{equation}\label{eq:defnx}
   \dist(x,H) < ( n K_ {x,n} (n M_ n + 2) ) ^{-1} 
\end{equation}
   and let $n(x)=0$ if no such $n\in \N$ exists.
   Since $K_ {x,n}\ge 1$, $M_ n\ge 1$ and $\dist(x,H)>0$,
   there are only finitely many $n$ satisfying~\eqref{eq:defnx}.
   We claim that, for every
   \bkoneclistopaduxvH
   $a\in \boundary H$,
   \ekoneclistopaduxvH
\begin{equation}\label{eq:limnx}
   \lim_ {\substack{x\to a\\ x\in X\setminus H}} n(x)=\infty
   .
\end{equation}
   To show that, let
   \bkoneclistopaduxvH
   $a\in \boundary H$
   \ekoneclistopaduxvH
   and $n_ 0\in \N$ be fixed.
   Then there is $\eta > 0$ such that
$$
   K:=\max ( 1,  \Lip f_ {n_ 0} | _ {B_ H( a, ( n_ 0 M_ {n_ 0} + 2 + 3) \eta)} ) 
$$
   is finite. 
   \bkoneclistopaduxvH
   If $x\in X\setminus H$
   and
   \ekoneclistopaduxvH
\brijenxv
   $\ddd(x,a) < \eta$,
   \eqref{eq:generalInequality} shows that
$$
B_ H(u(x), (n_ 0 M_ {n_0} + 2)\dist(x,H))
\subset
B_ H( a, ( n_ 0 M_ {n_ 0} + 2 + 3) \eta)
$$
and
\erijenxv
   $K_ {x,n_ 0} \le K < \infty$.
   If, moreover, $\ddd(x,a) < \lambda := ( n_ 0 K (n_ 0 M_ {n_ 0} + 2) ) ^{-1}$,
   we see that \eqref{eq:defnx} is satisfied with $n=n_ 0$, 
   and hence $n(x)\ge n_ 0$. Therefore indeed $n(x)\ge n_ 0$  
   for all $x\in X\setminus H$ such that $\ddd(x,a) < \min( \eta, \lambda)$.

   Let $f_ 0(y)=0\in Z$ for $y\in H$ and
   define 
$$
   g(x) = f_ {n(x)}(u(x)),
   \qquad x\in X\setminus H 
  .
$$
   \eJKblue

\brijenxvbounded
   If
   $a\in H$,
   $r\in (0,\infty)\cup\{\infty\}$
   and $f$
   is bounded on $B(a, 12 r) \cap H$,
   we have
   $\{ f_n \}$
   uniformly bounded
   on $B(a,6 r) \cap H$.
   Then, by \eqref{eq:generalInequality}, $g$ is obviously bounded on $B_X(a, 2 r) \setminus H$.
   This proves property \eqref{eq:boundedness} for $g$, even with $B(a,r)$ replaced by $B(a,2r)$.
\erijenxvbounded

   We prove that if $a\in H$, $x\in X\setminus H$
   \JKblue
   and $n(x) > 0$
   \eJKblue
   then
   \begin{equation}\label{eq:ALP5}
   \normZ{g(x) - f(a)} \frac{\dist(x,H)}{\ddd(x,a)} \le \frac 1{\JKblue n(x) \eJKblue} +
       \frac{\normZ{f(a)}}{\JKblue n(x) \eJKblue} +
       \normZ{f_ {\JKblue n(x) \eJKblue}(a) - f(a)}
       .
   \end{equation}
   Since $f_ n(a) \to f(a)$
\JKblue
   and \eqref{eq:limnx} is true
   \bkoneclistopaduxvH
   for every $a\in \boundary H$,
   \ekoneclistopaduxvH
\eJKblue
   this will prove \eqref{eq:ALP3}.
\JKblue 
   Denote $n_ x:=n(x)$.
\eJKblue
   We distinguish between
   two cases.

   If $\frac{\dist(x,H)}{\ddd(x,a)} \le \frac 1{ n_ x M_ { n_ x }}$ then we have 
      $\normZ{g(x) - f(a)}\frac{\dist(x,H)}{\ddd(x,a)} \le \normZ{f_ { n_ x }(u(x)) - f(a)} \cdot
   \frac 1{ n_ x M_ { n_ x }} \le \frac{M_ { n_ x }}{ n_ x M_ { n_ x }} + \frac{\normZ{f(a)}}{ n_ x M_ { n_ x }}$
   and thus \eqref{eq:ALP5} holds true.

   If $\frac{\dist(x,H)}{\ddd(x,a)} > \frac 1{ n_ x M_ { n_ x }}$ then
\begin{multline}\label{eq:rho-ux-a}
   \ddd(u(x),a) \le 
   \ddd(u(x),x) +\ddd(x,a) <
   2 \,\dist(x,H) + n_ x M_ { n_ x }\!\dist(x,H)
   = ( n_ x M_ { n_ x }+2)\,\dist(x,H) <
   1/ ( n_ x K_ {\JKblue x, \eJKblue n_ x } )
\end{multline}
   by~\eqref{eq:defnx}, 
   and hence
   \begin{gather*}
      \normZ{g(x)-f(a)} = \normZ{ f_ { n_ x }(u(x)) - f(a) } \le \normZ{ f_ { n_ x }(u(x)) - f_ { n_ x }(a) } + \\
      + \normZ{f_ { n_ x }(a) - f(a)}
    \overset{\JKblue \eqref{eq:Kxn}\eJKblue}
      \le
      K_ {x, n_ x }\,\ddd(u(x),a) + \normZ{ f_ { n_ x }(a) - f(a) }
      \overset{\eqref{eq:rho-ux-a}}{<}
      \frac{1}{n_ x} + \normZ{ f_ { n_ x }(a) - f(a)}.
   \end{gather*}
   Since $\dist(x,H) \le \ddd(x,a)$, this implies \eqref{eq:ALP5}.
   This completes the proof
   of
   \eqref{eq:ALP3}.

   \JKblue
 
   Now we want to prove \eqref{eq:continuity}.
   Suppose that $f$ is continuous at $ a \in \boundary H$.
   Let $\varepsilon > 0$ be given.
   Applying the property UCPC of $\{f_ k\}$ at $y_ 0=a$ (see \eqref{eq:hkyfy0})
   we obtain
   $\delta>0$ and $k_ 0\in \N$ such that
\begin{equation}\label{eq:epsdelta-00}
    \normZ{ f_ k(y) - f( a ) } < \varepsilon
\end{equation}
   for every $k\ge k_ 0$ and $y \in B_ H( a ,\delta)$. 
   By~\eqref{eq:limnx}, there is $\delta_ 1>0$ such that
   $n(x) \ge n_ 0$ whenever $x\in X\setminus H$ and $\ddd(x, a )<\delta_ 1$.
   Now, if $x \in X\setminus H$ and $\ddd(x, a )< \min(\delta_ 1,\delta)/3$ 
   then,
\brijenxv
   by \eqref{eq:generalInequality},
\erijenxv
    $\ddd( u(x), a )
\brijenxv
     \le 3 \ddd( x, a )
\erijenxv
     <   \min(\delta_ 1,\delta)
    $
   and therefore, by \eqref{eq:epsdelta-00},
$$
   \normZ{ g(x) - f( a ) }
   = \normZ{ f_ {n(x)} (u(x)) - f( a ) }
   < \varepsilon
  .
$$
   Therefore, we have \eqref{eq:continuity} whenever $ a \in \boundary H$
   and $f$ is continuous at $ a $.
\eJKblue

\bledenredcontinuous
   So far $g$ has all required properties but being continuous.
   Let
   \[
           \mathcal U=
           \{ B_{X\setminus H} (x, \dist(x, H) /
           \bkoneclistopaduxvH
           3
           \ekoneclistopaduxvH
           ) \setcolon x\in X\setminus H \}
           .
   \]
   Using the paracompactness of $(X\setminus H, \ddd)$, let
   $\{ \phi_\alpha\}_ {\alpha\in \AAA}$ be a~continuous
   locally finite
   partition of unity subordinated to $\mathcal U$,
\brijenxv
   (see e.g.,~\cite[Theorem VIII.4.2]{DuguTopo}).
\erijenxv
   For $\alpha \in \AAA$, find $x_\alpha \in X\setminus H$ such that
   $\spt \phi_\alpha \subset B_{X\setminus H} (x_\alpha, \dist(x_\alpha, H)
   /
   \bkoneclistopaduxvH
   3
   \ekoneclistopaduxvH
   )$.
   Let
   \[
      \tilde g (x) = \sum _ {\alpha \in \AAA} \phi_\alpha (x)  g(x_\alpha)
      \qquad
      x\in X\setminus H
      .
   \]
   Then $\tilde g$ is continuous on $X \setminus H$.
   Whenever $g$ satisfies \eqref{eq:continuity} resp.\ \eqref{eq:boundedness},
   the same is true for $\tilde g$
\brijenxvbounded
   (with $B(a, 2 r) \setminus H$ for $g$
   replaced by $B(a, r) \setminus H$ for $\tilde g$).
\erijenxvbounded
   If $\alpha \in \AAA$, $x\in B_{X\setminus H} (x_\alpha, \dist(x_\alpha, H)
   /
   \bkoneclistopaduxvH
   3
   \ekoneclistopaduxvH
   )$
   and
   \bkoneclistopaduxvH
   $a\in \boundary H$
   \ekoneclistopaduxvH
   then
   \[
      \frac {1}{4}
      \frac { \dist(x, H) }{ \ddd(x, a) }
      \le
      \frac { \dist(x_\alpha, H) }{ \ddd(x_\alpha, a) }
      \le
      4
      \frac { \dist(x, H) }{ \ddd(x, a) }
      .
   \]
   Hence from \eqref{eq:ALP3} for $g$ we obtain that
    \eqref{eq:ALP3} is also true for $\tilde g$.
\eledenredcontinuous
\end{proof}

\end{document}